\newtheorem{theorem}{Theorem}
\newtheorem{corollary}[theorem]{Corollary}
\newtheorem{lemma}[theorem]{Lemma}
\newtheorem{definition}[theorem]{Definition}
\newtheorem{proposition}[theorem]{Proposition}
\newtheorem*{example*}{Example}
\newtheorem*{remark*}{Remark}
\def\bal{\begin{aligned}}
\def\eal{\end{aligned}}
\def\be{\begin{equation}\label}
\def\ee{\end{equation}}
\def\bcs{\begin{cases}}
\def\ecs{\end{cases}}
\def\={\;=\;}
\def\+{\,+\,}
\def\-{\,-\,}
\def\C{{\mathbb C}}
\def\Z{{\mathbb Z}}
\def\Q{{\mathbb Q}}
\def\R{{\mathbb R}}
\def\t{\theta}
\def\e{\varepsilon}
\def\ve{\varepsilon}
\def\lb{\llbracket}
\def\rb{\rrbracket}
\def\sD{\mathcal{D}}
\def\sO{\mathcal{O}}
\def\sF{\mathcal{F}}
\def\sH{\mathcal{H}}
\def\sW{\mathcal{W}}
\def\sL{\mathcal{L}}
\def\Im{{\rm Im}}
\def\Hom{{\rm Hom}}
\title{Hodge structures and differential operators}
\author{Masha Vlasenko}
\begin{document}
\date{}
\maketitle

These are extended notes of my talk at the IMPANGA seminar in Warsaw on October 11, 2019. The goal was to give a non-technical and arithmetically motivated introduction to the definition of the limiting mixed Hodge structure by Schmid and Deligne. We state several assertions in terms natural to the classical theory of ordinary differential operators and prove them using elementary arguments. References to the geometric context are only made in a few remarks and examples, which can be easily skipped by a reader not familiar with algebro-geometric techniques.     

In~\S\ref{sec:st-basis} we review a classical method of solving linear differential equations near a regular singular point using Laurent series and the logarithm function. This method produces what we call a standard basis in the space of solutions of the differential equation. The key (and well known) observation is that, if the differential operator itself is a polynomial with coefficients in a field $K \subset \C$, then the power series involved in the standard basis also have coefficients in $K$. 

In \S\ref{sec:PF} we mention geometric (Picard--Fuchs) differential equations and express some period integrals in the standard basis. The upshot of what is done later in \S\ref{sec:limitF} is that the coefficients of such expressions are periods of the limiting Hodge structure. One may wish to skip \S2 on the first reading.

In \S\ref{sec:HS} we review the definition and basic examples of Hodge structures. In \S\ref{sec:limitF} we consider algebraic families of linear functionals on the space of solutions of a polynomial differential operator and define their limits at a singular point. Our main observation (Lemma~\ref{key-lemma} and Corollary~\ref{K-struct-corollary}) is that such limits span the $K$-structure dual to the one determined by the standard basis. We then describe a mixed Hodge structure on the space of solutions, which for geometric differential operators coincides with the limiting mixed Hodge structure constructed by Schmid in~\cite{Sch73}.

\section{Solutions near a regular singular point}\label{sec:st-basis}

Let $K \subset \C$ be a field. A \emph{$K$-structure} on a $\C$-vector space $V$ is a $K$-vector space $V_K \subset V$ such that $V = V_K \otimes_K \C$.

Let $K(t)$ be the field of rational functions with coefficients in $K$. Denote $\t=t \frac{d}{dt}$ and consider a differential operator 
\be{L}
L = \t^r + q_1(t) \t^{r-1} + \ldots + q_{r-1}(t) \t + q_r(t) \; \in \; K(t)[\t]
\ee
whose coefficients satisfy the condition
\be{MUM}
q_j(0) = 0, \qquad 1 \le j \le r.
\ee      
Using classical methods one can construct a set of $r$ linearly independent solutions of~\eqref{L} in the ring $K\lb t \rb [\log(t)]$, see Lemma~\ref{standard-basis-lemma} below. Later we are going to discuss the $K$-structure that this construction yields on the space of solutions of $L$ near $t=0$. 

One starts with finding a Laurent series solution $\phi(t) = \sum a_n t^n$. Expanding the coefficients $q_j(t)$ into power series, let us rewrite $L = \sum_{j \ge 0} t^j p_j(\t)$ with some polynomials $p_j \in K[\theta]$. Condition~\eqref{MUM} is equivalent to $p_0(\t)=\t^r$, and hence the differential equation $L \phi = 0$ is equivalent to the recurrence relation
\[
n^r a_n + p_1(n-1)a_{n-1} + p_2(n-2)a_{n-2}+\ldots = 0.
\] 
Here we see that the smallest $n$ such that $a_n \ne 0$ can only be $n=0$, and with the normalization $\phi(0)=1$ ($a_0=1$) there is a unique Laurent series solution which we will denote $\phi_0(t) = \sum_{n \ge 0} a_n t^n \in 1 + t K\lb t \rb$. Next, one looks for solutions of the shape $\phi(t)=\log(t) \phi_0(t) + \sum_{n}b_n t^n$, and so on:

\begin{lemma}\label{standard-basis-lemma} There exist unique power series $f_0 \in 1 + t \C\lb t \rb$,$f_1 \ldots,f_{r-1} \in t\C\lb t \rb$ such that
\be{standard-basis}
\phi_k(t) = \sum_{j=0}^k \frac{\log(t)^j}{j!} f_{k-j}(t), \qquad k=0,\ldots,r-1,
\ee
are solutions of the differential operator~\eqref{L}. Moreover, one has $f_j \in K\lb t \rb$ for all $0 \le j \le r-1$. 
\end{lemma}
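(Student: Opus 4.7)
The plan is to construct the $f_j$ recursively in $j$, turning the equation $L\phi_k = 0$ into a triangular system of inhomogeneous equations on the $f_j$.

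First I would record how $L$ acts on expressions $\psi(t,\log t)$ in which $\psi(t,s)$ is polynomial in $s$ with power series coefficients in $t$. Since $\theta\log t = 1$, viewing $s$ as an independent formal variable one has $\theta = \theta_t + \partial_s$ with $\theta_t = t\,d/dt$ commuting with $\partial_s$. Writing $L = \sum_{j \ge 0} t^j p_j(\theta)$ (as in the text) and Taylor-expanding each $p_j(\theta_t + \partial_s)$ in powers of $\partial_s$, one obtains
\[
L \= \sum_{m \ge 0} \frac{1}{m!}\, L^{(m)}(t,\theta_t)\,\partial_s^m, \qquad L^{(m)} := \sum_j t^j p_j^{(m)}(\theta).
\]
For $\phi_k(t,s) := \sum_{j=0}^k s^j f_{k-j}(t)/j!$, a direct check shows $\partial_s^m \phi_k = \phi_{k-m}$ (zero for $m>k$), so
\[
L\phi_k(t,\log t) \= \sum_{m=0}^k \frac{1}{m!}\, L^{(m)}\phi_{k-m}(t,\log t).
\]

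Next I would proceed by induction on $k$. The case $k=0$ is the Laurent series argument recalled just before the lemma, which produces a unique $f_0 \in 1 + tK\lb t \rb$. For the inductive step, assume $f_0,\dots,f_{k-1} \in K\lb t \rb$ have already been constructed so that $\phi_0,\dots,\phi_{k-1}$ are solutions. Extracting the coefficient of each power of $\log t$ in the identity above and using the inductive hypothesis, all equations are automatic except the coefficient of $(\log t)^0$, which reduces to the single scalar equation
\[
L f_k \= -\sum_{m=1}^k \frac{1}{m!}\, L^{(m)} f_{k-m},
\]
whose right-hand side lies in $K\lb t \rb$.

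Finally I would solve this equation for $f_k \in tK\lb t \rb$. Writing $f_k = \sum_n c_n t^n$ and extracting the coefficient of $t^n$, the left-hand side becomes $n^r c_n$ plus a $K$-linear combination of $c_0,\dots,c_{n-1}$, so $c_n \in K$ is determined uniquely for $n \ge 1$ once $c_0$ is fixed. The delicate point---and the one place where the bound $k \le r-1$ enters---is the coefficient of $t^0$: one must check that the constant term of the right-hand side vanishes, so that the choice $c_0 = 0$ is consistent (and necessary to enforce $f_k \in tK\lb t\rb$). Since $p_0(\theta) = \theta^r$, the constant term of $L^{(m)} f_{k-m}$ equals $p_0^{(m)}(0)\,f_{k-m}(0)$, and each summand with $1 \le m \le k$ vanishes because either $m < k$ (and then $f_{k-m}(0) = 0$ by induction) or $m = k < r$ (and then $p_0^{(m)}(0) = 0$). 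This compatibility check is the only nontrivial step; once it is in place, the $K$-coefficient recurrence constructs $f_k$ uniquely, completing the induction.
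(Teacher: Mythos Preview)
Your argument is correct and follows essentially the same route as the paper: both reduce $L\phi_k=0$ to the triangular system $Lf_k = -\sum_{m=1}^k \tfrac{1}{m!}L^{(m)}f_{k-m}$ via the Taylor expansion $L=\sum_m \tfrac{1}{m!}L^{(m)}\partial_s^m$, and then solve inductively. The only cosmetic difference is that the paper packages your constant-term case analysis as the single observation $L^{(j)}(K\lb t\rb)\subset tK\lb t\rb$ for $0\le j\le r-1$ (using $p_0^{(j)}(0)=0$), which immediately places the right-hand side in $tK\lb t\rb$ without splitting on whether $m<k$ or $m=k$.
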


\begin{proof} For $0 \le j \le r-1$ consider the differential operator of order $r-j$ given by $L^{(j)} := \frac{\partial^j L}{\partial \t^j}$, the formal $j$th derivative of $L$ in $\t$. One can easily check that expressions~\eqref{standard-basis} are solutions of $L$ if and only if for each $k$ we have
\be{phi-an-recurrence}
L f_k + L^{(1)} f_{k-1} + \frac1{2!} L^{(2)} f_{k-2} + \ldots \frac1{k!} L^{(k)} f_{0} = 0.
\ee
If we write $L = \sum_{j \ge 0} t^j p_j(\t)$ with $p_j \in K[\t]$ of degree at most $r$ and $p_0(\t)= \t^r$, then equation $L( \sum a_n t^n ) = \sum b_n t^n$ is equivalent to the recurrence relation
\be{recurrence}
n^r a_n + p_1(n-1) a_{n-1} + p_2(n-2) a_{n-2} + \ldots = b_n.
\ee
One can easily see that there is a unique (up to multiplication by a constant) non-zero Laurent series solution $f_0=\sum_{n} a_n t^n$ to $L f_0 = 0$, and this solution is a power series with $a_0 \ne 0$. Moreover, if $a_0 \in K$ then $a_n \in K$ for all $n \ge 0$. Secondly, we observe that $L(K\lb t \rb) \subset t K\lb t \rb$ and, more generally, we have
\[
L^{(j)}(K\lb t \rb) \subset t K\lb t \rb, \qquad 0 \le j \le r-1.
\]
The third observation we can make from formula~\eqref{recurrence} is that the map
\[
L: t \C\lb t \rb \to t \C\lb t \rb
\]
is invertible and maps $t K\lb t \rb$ to itself. With these three observations we can now solve~\eqref{phi-an-recurrence} by induction on $k=0,1,\ldots$ as follows. We start with $k=0$ and normalize the unique power series solution so that $\phi_0=f_0 \in 1 + tK\lb t \rb$. For $k \ge 1$ equation~\eqref{phi-an-recurrence} has shape $L f_k = b$ with $b = - \sum_{j=0}^{k-1} (j!)^{-1} L^{(j)} f_{k-j} \in tK\lb t \rb$. This latter equation has a unique solution $f_k \in t \C\lb t \rb$. Moreover, this $f_k$ has coefficients in $K$, which proves the second assertion of the lemma. 
\end{proof}

\begin{example*} For the differential operator $L=\t^r$ the solutions constructed in Lemma~\ref{standard-basis-lemma} are $\phi_k(t)=(k!)^{-1} \log(t)^k$, $0 \le k \le r-1$.
\end{example*}

\begin{example*} The hypergeometric differential operator $L = \t^2 - \frac{t}{1-t} \t - \frac14 \frac{t}{1-t}$ has solutions
\[\bal
&\phi_0(t) = \, _2F_1(\tfrac12,\tfrac12,1;t) = \sum_{n=0}^\infty \frac{(\tfrac12)_n^2}{n!^2} t^n ,\\
&\phi_1(t) = \, \log(t) \phi_0(t) + \sum_{n=1}^\infty \frac{(\tfrac12)_n^2}{n!^2} \left(\sum_{k=1}^n \frac1{k(k-\tfrac12)}\right)  t^n .\\
\eal\]
\end{example*}

In fact the series $f_j(t)$ in Lemma~\ref{standard-basis-lemma} converge in some neighbourhood of $t=0$. This follows from the classical theorem of Fuchs (see~\cite[Theorem 2.6]{Be-GaussHF}), but one could also estimate the grows of coefficients of $f_j(t)$ directly. Let $V$ be the vector space of solutions to $L$ in a neighbourhood of some regular point $t=t_0$ located close to $t=0$. By Cauchy's theorem $\dim_\C V = r$ and hence functions~\eqref{standard-basis} form a basis in the space of solutions, which we will refer to as \emph{the standard basis}. Note that this basis and the respective $K$-structure
\be{Kstruct}
V_K \= \text{ $K$-span of } \phi_0, \ldots, \phi_{r-1} 
\ee
depend on the choice of branch of $\log(t)$ near the base point $t=t_0$.  

One can write
\[
(\phi_0, \ldots, \phi_{r-1}) = (f_0,\ldots,f_{r-1}) t^{\begin{pmatrix} 0 & 1 & 0 & \ldots \\ 0 & 0 & 1 & \ldots \\ &  &  & \ldots \\ \end{pmatrix}},
\]
where for a square matrix $N$ we use the matrix-valued function $t^N = \exp(N \log(t))=\sum_{h \ge 0} (h!)^{-1}N^h \log(t)^h$. Since $\log(t)$ changes to $\log(t)+2 \pi i$ after a counterclockwise turn around the origin, the respective \emph{local monodromy transformation} $\gamma: V \to V$ in the standard basis is given by the matrix  
\[
\gamma = \exp\left( 2 \pi i \begin{pmatrix} 0 & 1 & 0 & \ldots \\ 0 & 0 & 1 & \ldots \\ &  &  & \ldots \\ \end{pmatrix}\right) = \begin{pmatrix} 1 & 2 \pi i & \tfrac{(2 \pi i)^2}{2!} & \ldots \\ 0 & 1 & 2 \pi i & \ldots \\ &  &  & \ldots \\ \end{pmatrix}.
\]
Observe that this matrix is \emph{maximally unipotent}, that is $(\gamma-I)^r=0$ but $(\gamma-I)^k$ is nonzero when $k < r$. 

\begin{remark*} Condition~\eqref{MUM} means that the differential operator~\eqref{L} has a regular singularity at $t=0$ and the local monodromy around this point is maximally unipotent. More generally, operator~\eqref{L} has at most a regular singularity at $t=0$ if and only if none of the coefficients $q_j(t)$ has a pole at $t=0$. If this is the case, the solutions $\rho \in \C$ to the algebraic equation $\rho^r + \sum_{j=1}^r q_j(0) \rho^{r-j} = 0$ are called local exponents. (The reader could refer to~\cite[\S2]{Be-GaussHF} for the local analysis of singularities of linear differential operators.) By Fuchs' theorem, for each local exponent $\rho$ there is a holomorphic function $f(t)$ with $f(0) \ne 0$ such that $t^\rho f(t)$ is a solution of~\eqref{L}. Moreover, one can construct a basis in the space of solutions of~\eqref{L} near $t=0$ using only functions $\log(t)$, $t^\rho$ for local exponents $\rho$ and holomorphic series. Similarly to Lemma~\ref{standard-basis-lemma}, the power series involved in this \emph{standard basis} will have coefficients in the field $K(\rho_0,\ldots,\rho_{r-1})$. However in this note we will restrict ourselves to the case of maximally unipotent local monodromy.
\end{remark*}

\section{Picard--Fuchs differential operators}\label{sec:PF}

Period integrals are integrals of algebraically defined differential forms over domains described by algebraic equations and inequalities. Period functions are period integrals that depend on a parameter. For example,  the elliptic integral
\be{Legendre-int-1}
t \;\mapsto\; \psi(t) := \int_1^\infty \frac {dx}{\sqrt{x(x-1)(x-t)}} 
\ee
is a period function associated to the Legendre family of elliptic curves
\[
y^2 = x (x-1)(x-t).
\] 
Such functions typically satisfy linear differential equations with algebraic coefficients. For example, expanding the integrand in~\eqref{Legendre-int-1} in a power series in $t$ and integrating term by term one finds that
\be{Legendre-phi0}
\int_1^\infty \frac {dx}{\sqrt{x(x-1)(x-t)}} \= \pi \; _2F_1(\tfrac12,\tfrac12,1; t),
\ee
and hence this period integral is annihilated by the respective hypergeometric differential operator (see the example given in~\S\ref{sec:st-basis}). Vaguely speaking, Picard--Fuchs differential operators are those that annihilate period functions. An excellent introduction to this topic is given in~\cite[Chapter II]{KoZa-Periods}.  

Solution spaces of Picard--Fuchs differential operators have a natural $\Q$-structure, which is preserved by the monodromy transformations. This $\Q$-structure consists of period functions. To be slightly more precise, assume we are given a smooth projective family $f: X \to U$ over an algebraic curve $U = \mathbb{A}^1 \setminus \{\text{roots of } Q\}$ for some polynomial $Q \in K[t]$. Then the relative de Rham cohomology $H^m_{dR}(X/U)$ is a module over the ring of regular functions $\sO_U=K[t,Q(t)^{-1}]$ equipped with a $K$-linear transformation $\tfrac{d}{dt}: H^m_{dR}(X/U) \to H^m_{dR}(X/U)$ (Gauss--Manin connection). Consider the ring  $\sD_U=\sO_U[\tfrac{d}{dt}]$ of differential operators on $U$. We further assume there is a class of differential forms $\omega  \in H^m_{dR}(X/U)$ and a differential operator $L \in \sD_U$ such that $L \omega = 0$. Moreover, we assume that the differential submodule $M = \sD_U \omega \subset H^m_{dR}(X/U)$ is isomorphic to $\sD_U/\sD_U L$. Then the $\Q$-structure on the space $V$ of solutions of $L$ near a regular point  is given by period integrals $V_\Q = \left\{ \int_\gamma \omega  \right\}$ over families of topological cycles $\gamma_t \in H_m(X_t(\C),\Q)$ where $X_t=f^{-1}(t)$ is the fibre at $t$. 

One passes from this $\Q$-structure to the $K$-structure discussed in \S\ref{sec:st-basis} by expressing period integrals in the standard basis~\eqref{standard-basis}:  
\[
\int_\gamma \omega \= \sum_{k=0}^{r-1} \lambda_k \, \phi_k(t).
\]
We will return to this setting at the end of~\S\ref{sec:limitF}, to mention that (under certain assumptions) the coefficients $\lambda_k \in \C$ are periods of the limiting Hodge structure at $t=0$.

For example, the following period integrals for the Legendre family and $\omega =\tfrac{dx}{y}$ can be expressed in the standard basis as
\be{Legendre-int-2}
\bal
&\int_1^\infty \frac {dx}{\sqrt{x(x-1)(x-t)}} \= \pi \, \phi_0(t),\\
&\int_t^1 \frac {dx}{\sqrt{x(x-1)(x-t)}} \;\; \= - 4 i \log(2) \, \phi_0(t) + i \,\phi_1(t).
\eal\ee
(Here we assume that $0 < t < 1$ and the real branch $\log(t) \in \R$ is chosen in $\phi_1(t)$ in the right-hand side.) Since the space of solutions of this Picard--Fuchs differential operator is 2-dimensional, it should be possible to write each of the other two elliptic integrals $\int_{-\infty}^0 \omega$ and $\int_0^t \omega$ as a $\Q$-linear combination of the two integrals given above. One can quickly check that in fact $\int_{-\infty}^0\omega = \int_t^1 \omega$ and $\int_0^t \omega = \int_1^\infty \omega$.
We already mentioned how to obtain the first expression in~\eqref{Legendre-int-2}. The second expression is  trickier. Firstly, we notice that the change of variable $x \mapsto 1 - \frac{1-t}{x}$ transforms the second elliptic integral in~\eqref{Legendre-int-2} into $- i \, \psi(1-t)$ where $\psi(t)$ is the first integral (we introduced this notation in~\eqref{Legendre-int-1}). It remains to show that 
\be{hg-0-1-relation}
\pi \, \phi_0(1-t) = 4 \log(2) \phi_0(t) - \phi_1(t).
\ee 
In order to prove~\eqref{hg-0-1-relation} we shall use the classical relations among hypergeometric series $F(a,b,c; t) := \, _2F _1(a,b,c;t)$ due to Kummer, and also their analytic dependence on the parameters $a,b,c$. 

The following formula is valid for $Re(c)>0, Re(c-a-b)>0$: 
\be{K-rel}\bal
F(a,b,c;1-t) =& \frac{\Gamma(c)\Gamma(c-a-b)}{\Gamma(c-a)\Gamma(c-b)} F(a,b,a+b-c+1;t) \\
&+ \frac{\Gamma(c)\Gamma(a+b-c)}{\Gamma(a)\Gamma(b)} z^{c-a-b} F(c-a,c-b,1+c-a-b; t).
\eal\ee  
We take a small $\e > 0$ and substitute $a=b=\tfrac12-\e$, $c=1-\e$. Observe that the following linear combination yields the log solution in the limit:
\[\bal
& \underset{\e \to 0}\lim \left( \frac{t^{c-a-b} F(c-a,c-b,1+c-a-b; t) - F(a,b,a+b-c+1;t)}{c-a-b}\right)\\
& = \underset{\e \to 0}\lim \; \frac1\e\left(t^\e\sum_{n \ge 0} \frac{(\tfrac12)_n^2}{(1+\e)_n n!}t^n - \sum_{n \ge 0} \frac{(\tfrac12 - \e)_n^2}{(1-\e)_n n!}t^n \right)\\
& = \sum_{n \ge 0} \frac{t^n}{n!} \; \underset{\e \to 0}\lim \; \frac1\e\left( \frac{t^\e (\tfrac12)_n^2}{(1+\e)_n } - \frac{(\tfrac12 - \e)_n^2}{(1-\e)_n }\right) \\
& = \sum_{n \ge 0} \frac{t^n}{n!} \; \underset{\e \to 0}\lim \; \left( \frac{t^\e (\tfrac12)_n^2}{(1+\e)_n }\left(\log(t) -\sum_{k=0}^{n-1} \frac1{1+\e+k}\right) - \frac{(\tfrac12 - \e)_n^2}{(1-\e)_n }\sum_{k=0}^{n-1} \left(-\frac2{\tfrac12-\e+k} + \frac1{1-\e+k}\right)\right) \\
&= \sum_{n \ge 0} \frac{(1/2)_n t^n}{n!^2}\left( \log(t)- \sum_{k=1}^n \frac1{k(k-\tfrac12)}\right) = \phi_1(t).
\eal\]
(The above computation is a known trick, see the part about Kummer relations in~\cite[\S1]{Be-GaussHF}.) Therefore the limit of relation~\eqref{K-rel} when $\e \to 0$ gives
\[\bal
\phi_0(1-t) &= \underset{\e \to 0}\lim \left( \frac{\Gamma(1-\e)\Gamma(\e)}{\Gamma(\tfrac12)^2} + \frac{\Gamma(1-\e)\Gamma(-\e)}{\Gamma(\tfrac12-\e)^2}\right) \phi_0(t) + \underset{\e \to 0}\lim \left( \frac{\e \Gamma(1-\e)\Gamma(-\e)}{\Gamma(\tfrac12-\e)^2}\right)\phi_1(t)\\
&= \frac{4 \log(2)}{\pi} \phi_0(t) - \frac1{\pi} \phi_1(t),
\eal\]
which completes our proof of~\eqref{hg-0-1-relation}.

Note that taking the limit in~\eqref{hg-0-1-relation} as $t \to 1$ one gets the expression
\[\pi = \sum_{n=0}^\infty \frac{(\tfrac12)_n^2}{n!^2}\sum_{k=n+1}^\infty \frac1{k(k-\tfrac12)}.
\]

\section{Hodge structures}\label{sec:HS}

Let $m \in \Z$ be an integer. A \emph{pure Hodge structure of weight $m$} is a $\C$-vector space $V$ with a $\Q$-structure $V_\Q$ and a decreasing filtration $\sF^\bullet V$ satisfying the conditions that for any integers $p,q$ such that $p+q=m+1$ there is a direct sum decomposition $V = \sF^p \oplus \overline{\sF^q}$.

\begin{example*} For a compact complex manifold $\mathfrak{X}$ and $0 \le m \le 2 \dim \mathfrak{X}$, the complexification of the singular cohomology space $V_\Q = H^m(\mathfrak{ X},\Q)$ possesses a pure Hodge structure of weight $m$. This Hodge structure is a basic construction in Hodge theory. 

The following special case is interesting from the arithmetic perspective. When $\mathfrak{X} = X(\C)$ is given by complex points of a smooth projective algebraic variety $X$ defined over a field $K \subset \C$, then there is also a natural $K$-structure given by the de Rham cohomology $V_K = H^m_{dR}(X)$. If $V_\Q$ is the rational structure given by singular cohomology then, in general, $V_K \ne V_\Q \otimes_\Q K$. Comparison of these two structures yields \emph{periods of} $X$.  The Hodge filtration  $\sF^\bullet$ can be defined algebraically, as a filtration of $V_K$. 
\end{example*}

The reader could check the following simple fact as an exercise:

\begin{example*} There is no one dimensional pure Hodge structure of odd weight. For each $k \in \Z$ there is a unique up to isomorphism one-dimensional pure Hodge structure of weight $2k$; it is denoted by $\Q(-k)$.  
\end{example*}

A \emph{mixed Hodge structure} is a $\C$-vector space $V$ with a $\Q$-structure $V_\Q$ and two filtrations: an increasing filtration $\sW_\bullet V$ defined over $\Q$ (that is, it comes from a filtration on $V_\Q$)
and a decreasing filtration $\sF^\bullet V$ satisfying the condition that for every $m$ the graded piece $gr^{\sW}_m V = \sW_m/\sW_{m-1}$ is a pure Hodge structure of weight $m$. Here $\sW_\bullet$ is called the \emph{weight filtration} and $\sF^\bullet$ is called the \emph{Hodge filtration}. 

\bigskip

In 1970's Pierre Deligne showed that the cohomology of a complex variety (possibly singular or non-compact) is a mixed Hodge structure. The following example, in which a mixed Hodge structure arises in a limiting process, will serve as a motivation for our computations in \S\ref{sec:limitF}.

\begin{example*} For a smooth projective family of algebraic varieties $f: X \to U$ over a curve $U \subset \mathbb{A}^1$ one can consider the family of pure Hodge structures of weight $m$ given by $V_t = H^m(X_t)$ where $X_t=f^{-1}(t)$ is the fibre at $t \in U(\C)$. Suppose $0 \in \mathbb{A}^1 \setminus U$ is a singular point and we would like to define the limiting Hodge structure $V$ at $t=0$. We fix a base point $t={t_0}$ close to $t=0$ and define $V_\Q := (V_{t_0})_\Q = H^m(X_{t_0},\Q)$. Since the local monodromy transformation $\gamma : V_{t_0} \to V_{t_0}$ preserves the $\Q$-structure $(V_{t_0})_\Q$, one can think that the $\Q$-structure in the family is not varying. However it happens that $\gamma(\sF^p V_{t_0}) \ne \sF^pV_{t_0}$ for some $p$, which is an obstacle for defining the limiting filtration. In~\cite{Sch73} Schmid introduced a way to remove the monodromy of the Hodge filtration, so that the resulting filtration passes to the limit as $t \to 0$. Let us sketch the approach in~\cite{Sch73}. Take a small punctured disk $\Delta^* = \Delta \setminus \{ 0 \}$ and consider its universal covering by an upper half of the complex plane $e : \sH \to \Delta^*$, $e(z) = \exp(2 \pi i z)$. After one chooses a preimage of the base point $t_0 \in \Delta^*$ in $\sH$ (note that this is equivalent to choosing a branch of $\log(t)$ near $t_0$),  for each $z \in \sH$ there is now a preferred linear isomorphism  $V_{e(z)} \cong V_{t_0}$. This yields a family of filtrations $\sF^\bullet_{z} V$ on $V=V_{t_0}$ indexed by $z \in \sH$. Let $\gamma = \gamma_s \gamma_u$ be the Jordan decomposition of the monodromy transformation into its semisimple and unipotent parts. Consider the nilpotent transformation $N = \log(\gamma_u): V \to V$. Schmid shows (\cite[Theorem 6.16]{Sch73}) there is a limiting filtration
\be{limiting-Fp}
\sF^p_\infty V := \underset{{\rm Im}(z) \to \infty}\lim \exp(- z N) \sF^p_z V,
\ee
and $V$ equipped with $\sF^\bullet_\infty$ and the \emph{monodromy weight filtration} $\sW_\bullet$ is a mixed Hodge structure. The monodromy weight filtration was defined by Deligne as follows. For a nilpotent transformation of a vector space there is an associated filtration  called the Jacobson filtration. If $\sL_\bullet$ is the Jacobson filtration associated to $N: V \to V$, then $\sW_\bullet = \sL_{\bullet - m}$. 
\end{example*}

For later use, we would like to mention that if a nilpotent transformation $N: V \to V$ has one Jordan block and $d = \dim V$ then the respective Jacobson filtration $\sL_\bullet V$ is given by
\be{Jacobson-L}\begin{matrix}
0 & \subset& N^{d-1}(V) & \subset & \ldots & \subset & N(V) & \subset & V \\
\parallel &  & \parallel &  &  &  & \parallel &  & \parallel \\
\sL_{-d} & \subset & \sL_{-d+1}=\sL_{-d+2} & \subset & \ldots & \subset & \sL_{d-3}=\sL_{d-2} & \subset & \sL_{d-1}. \\
\end{matrix}\ee

\section{The limiting Hodge structure on the space of solutions}\label{sec:limitF}

Let us return to the setting of \S\ref{sec:st-basis}. We are given a differential operator    
\[
L = \t^r + q_1(t) \t^{r-1} + \ldots + q_{r-1}(t) \t + q_r(t) \; \in \; K(t)[\t]
\]
with rational coefficients satisfying the condition
\[
q_j(0) = 0, \qquad 1 \le j \le r.
\]
As we explained in \S1, this condition means that $t=0$ is a regular singularity and the local monodromy of solutions of $L$ around this point is maximally unipotent. We shall now give an alternative construction of the $K$-structure on the space of solutions of $L$, which was defined in~\eqref{Kstruct} using the standard basis of solutions.  

We fix a punctured disc $\Delta^* = \Delta \setminus \{ 0 \} = \{ t \in \C \;|\; 0 < |t| < \ve \}$ which is sufficiently small to contain no singularities of $L$. Consider the universal covering 
\[
e: \sH \to \Delta^*, \quad e(z)=\exp(2 \pi i z)
\]
by the respective upper halfplane $\sH = \{ z \in \C \,|\, \Im(z) > - \frac 1{2 \pi} \log(\ve) \}$.   It will be convenient to view multivalued solutions of $L$ in $\Delta^*$ as functions on $\sH$. For an open subset $\mathcal{U} \subset \C$ we denote by $\sO_\mathcal{U}^{an}$ the ring of complex analytic (holomorphic) functions in $\mathcal{U}$. Consider the solution space 
\[
V := \{ u(z) \in \sO^{an}_{\sH} \,|\, (e^*L)u=0\},
\]
where $e^*L = (\tfrac 1{2 \pi i} \tfrac{d}{dz})^r + \sum_{j=0}^r q_j(e(z)) (\tfrac 1{2 \pi i} \tfrac{d}{dz})^{r-j}$ is the pullback of $L$ to $\sH$. This differential operator is obtained via the substitution $t=e(z)$ in $L$. By Cauchy's theorem $\dim_\C V = r$. The basis in $V$ given by
\be{st-basis-z}
\phi_k(z) = \sum_{j=0}^k \frac{(2 \pi i z)^j}{j!} f_{k-j}(e(z)) , \qquad 0 \le k \le r-1 
\ee
with $f_j \in \sO_\Delta^{an}$ defined in Lemma~\ref{standard-basis-lemma} will be called the \emph{standard basis}. The monodromy transformation $\gamma : V \to V$ acts by 
\be{M0}
(\gamma u)(z)=u(z+1).
\ee 
The space of linear functionals on the solution space will be denoted by $V^\vee := Hom_\C(V,\C)$.

\begin{definition}\label{alg-family-def} A family of linear functionals $\pi_z \in V^\vee$ indexed by $z \in \sH$ is an \emph{algebraic family} if it is given by
\be{alg-functionals}
\pi_z : u \mapsto \sum_{j=0}^{r-1} \frac{v_j(e(z)) }{ (2 \pi i)^{j}} \left( \frac{d^ju}{dz^j} \right)(z)
\ee
for some rational functions $v_0,\ldots,v_{r-1} \in K(t)$ having no poles in $\Delta^*$. To such a family we associate its \emph{symbol} $m = \sum_{j=0}^{r-1} v_j(t) \t^j$, and we will write $\pi_z=\pi_z(m)$ to denote the respective linear functionals~\eqref{alg-functionals}. 

An algebraic family of linear functionals with symbol $m = \sum_{j=0}^{r-1} v_j(t) \t^j$ is said to be \emph{analytic at $t=0$} if none of the coefficients $v_j \in K(t)$ has a pole at $t=0$.  
\end{definition}

We would like to consider the limits of $\pi_{z} \in V^\vee$ as $\Im(z) \to +\infty$ in algebraic families. However one can easily check in formula~\eqref{alg-functionals} that $\pi_{z+1} \ne \pi_{z}$, which is an obvious obstruction for taking the above mentioned limit. We will now remove this monodromy using Schmid's formula~\eqref{limiting-Fp} (see~\cite[(6.15)]{Sch73}):

\begin{lemma}\label{key-lemma} Let $N = \log(\gamma)=\sum_{h \ge 1} (-1)^{h-1} (\gamma-I)^h/h$ be the logarithm  of the local monodromy transformation $\gamma$. For an algebraic family of linear functionals $\pi_z=\pi_z(m)$ corresponding to $m = \sum_{j=0}^{r-1} v_j(t) \t^j$ the family of linear functionals defined by 
\[
\pi'_{z} := \exp(- z N) \circ \pi_{z}  \in V^\vee
\]
satisfies $\pi'_{z+1} = \pi'_{z}$. The limit $\lim_{\Im(z) \to +\infty}  \pi'_{z}$ exists whenever $(\pi_z)$ is analytic at $t=0$, in which case one has
\be{lim-m}
\lim_{\Im(z_0) \to +\infty} \pi'_{z} \=  \sum_{j=0}^{r-1} v_j(0) \phi_j^\vee. 
\ee
Here  $\phi_0^\vee, \phi_1^\vee, \ldots$ is the basis in $V^\vee$ dual to the standard basis~\eqref{st-basis-z}.
\end{lemma}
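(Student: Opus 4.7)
The plan is to verify both claims by direct computations in the standard basis~\eqref{st-basis-z}. For monodromy invariance, the substitution $z \mapsto z+1$ in~\eqref{alg-functionals} gives $\pi_{z+1}(u) = \pi_z(\gamma u)$ by definition of $\gamma$, and since $\gamma = \exp(N)$ commutes with $\exp(-zN)$ one obtains
\[
\pi'_{z+1} \= \pi_{z+1} \circ \exp(-(z+1)N) \= \pi_z \circ \bigl(\gamma \exp(-(z+1)N)\bigr) \= \pi_z \circ \exp(-zN) \= \pi'_z,
\]
so $\pi'_z$ descends to a function of $t = e(z)$ on $\Delta^*$.

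To establish~\eqref{lim-m}, it is enough to compute $\pi'_z(\phi_j)$ for each $0 \le j \le r-1$. From the monodromy matrix displayed in \S\ref{sec:st-basis} one reads off $N\phi_k = 2\pi i\,\phi_{k-1}$ (with $N\phi_0 = 0$), so
\[
\exp(-zN)\phi_j \= \sum_{l=0}^{j} \frac{(-\log t)^l}{l!}\, \phi_{j-l}, \qquad \log t := 2\pi i z.
\]
I would then apply $\pi_z$ to this expansion and expand each $\pi_z(\phi_{j-l}) = \sum_i v_i(t)\,(\theta^i \tilde\phi_{j-l})(t)$ using~\eqref{st-basis-z}. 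Writing $\theta = T + \theta_t$ as operators on $K\lb t\rb[\log t]$, where $T\frac{(\log t)^n}{n!} = \frac{(\log t)^{n-1}}{(n-1)!}$ and $\theta_t = t\frac{d}{dt}$ acts on power series, one checks that $T$ and $\theta_t$ commute, so $\theta^i = \sum_{k=0}^i \binom{i}{k} T^k \theta_t^{i-k}$. After collecting all resulting terms by powers of $\log t$, the coefficient of $(\log t)^m/m!$ acquires an overall factor $\sum_{l=0}^m (-1)^l \binom{m}{l} = (1-1)^m$, which vanishes for every $m \ge 1$. What survives is the single-valued expression
\[
\pi'_z(\phi_j) \= \sum_{i, k \ge 0} \binom{i}{k}\, v_i(t)\, \theta^{i-k} f_{j-k}(t),
\]
the sum effectively ranging over $0 \le k \le i \le r-1$ and $k \le j$.

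Under the hypothesis that each $v_i$ is analytic at $t=0$, every summand is regular at the origin. Because $f_0(0) = 1$, $f_m \in tK\lb t\rb$ for $m \ge 1$, and $\theta$ preserves $tK\lb t\rb$, we have $(\theta^n f_m)(0) = \delta_{n,0}\delta_{m,0}$; hence the only summand surviving at $t = 0$ is the one with $i = k = j$, contributing $\binom{j}{j}v_j(0) = v_j(0)$. Since $\phi_j^\vee(\phi_k) = \delta_{jk}$, this is exactly the $\phi_j^\vee$-coefficient of $\sum_{j=0}^{r-1} v_j(0)\phi_j^\vee$, completing the lemma. The main substantive step is the binomial cancellation $(1-1)^m = 0$ that eliminates all $\log t$ terms; the observation that the log-shift $T$ commutes with the power-series derivation $\theta_t$ is what makes this cancellation manifest.
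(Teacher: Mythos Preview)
Your proof is correct and follows essentially the same strategy as the paper: a direct computation in which the $\log t$ (equivalently, $z$) powers cancel via the identity $\sum_{l}(-1)^l\binom{m}{l}=(1-1)^m$, leaving a single-valued expression whose value at $t=0$ is read off from $f_0(0)=1$, $f_m\in tK\lb t\rb$ for $m\ge 1$.

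There are two organizational differences worth noting. First, you establish $\pi'_{z+1}=\pi'_z$ conceptually from $\pi_{z+1}=\pi_z\circ\gamma$ and $\gamma=\exp(N)$, whereas the paper obtains periodicity only as a byproduct of the explicit formula; your route is cleaner and separates the two claims. Second, you compute in the standard basis using the commuting decomposition $\theta=T+\theta_t$ on $K\lb t\rb[\log t]$, while the paper expands a general $u=\sum_k u_k(e(z))z^k$ and applies Leibniz to $(d/dz)^j$. These are dual bookkeeping choices leading to the same cancellation. Your intermediate formula
\[
\pi'_z(\phi_j)=\sum_{i\ge k}\binom{i}{k}\,v_i(t)\,\theta^{i-k}f_{j-k}(t)
\]
is in fact correct; the paper's corresponding display omits the Leibniz factor $\binom{j}{s}$ (a slip between its third and fourth lines), though this does not affect the limit since only the diagonal terms $i=k$ survive at $t=0$ and $\binom{k}{k}=1$.
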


\begin{proof} Note that every solution $u \in V$ can be uniquely written as
\[
u(z) = \sum_{k=0}^{r-1} u_k(e(z)) z^k
\]
with $u_0(t),\ldots,u_{r-1}(t) \in \sO_\Delta^{an}$. The action of $N$ on $V$ is given by
\[
N : \sum_k u_k(e(z)) z^k \mapsto \sum_k u_k(e(z)) k \, z^{k-1}.
\]
For $m = \sum_j v_j \t^j$ we evaluate $\pi_z' = \exp(- z N) \circ \pi_{z}(m)$ on $u = \sum_k u_k(e(z)) z^k \in V$ as follows:
\[\bal
\pi'_{z}(u) &\= \sum_{h \ge 0} \frac{(-z)^h}{h!} \pi_z(m)(N^h u) \\
&\= \sum_{h \ge 0} \sum_{j,k=0}^{r-1} \frac{(-z)^h}{h!} \frac{v_j(e(z))}{(2 \pi i)^{j}} \left(\frac{d}{dz}\right)^j \left( u_k(e(z))   \left(\frac{d}{dz}\right)^h z^k \right) \\
&\= \sum_{h \ge 0} \sum_{j,k=0}^{r-1} \frac{(-z)^h}{h!} v_j(e(z)) \sum_{s=0}^j (\t^{j-s} u_k)(e(z))  \frac{k(k-1)\ldots(k-h-s+1)}{(2 \pi i)^{s}} z^{k-h-s} \\
&\= \sum_{j,k=0}^{r-1} \sum_{s=0}^{\min(j,k)} v_j(e(z)) (\t^{j-s} u_k)(e(z))(2 \pi i)^{-s} \frac{k!}{(k-s)!} z^{k-s}\sum_{h \ge 0} (-1)^h \binom{k-s}{h} \\
&\= \sum_{j \ge k} v_j(e(z)) (\t^{j-k} u_k)(e(z)) \frac{k!}{(2 \pi i)^{k}}.\\
\eal\]
Note that the function in the last row is periodic in $z$, so we conclude that $\pi'_{z+1}=\pi'_{z}$. When all $v_j(t)$ are analytic at $t=0$ the above expression passes to the limit as $\Im(z)$ grows infinitely:
\[
\lim_{\Im(z) \to +\infty} \pi'_{z}(m)(u) \=  \sum_{j=0}^{r-1} v_j(0) u_j(0) \frac{j!}{(2 \pi i)^j}.
\] 
It remains to notice that the linear functional mapping $u=\sum_k u_k(e(z)) z^k$ into $u_j(0) (2 \pi i)^{-j}\,j!$
coincides with $\phi_j^\vee$. This fact follows from formula~\eqref{st-basis-z} because $f_0(0)=1$ and $f_j(0)=0$ when $j>0$.
\end{proof}

The following observation is a key fact relating the $K$-structure described in \S\ref{sec:st-basis} with the limiting process defined in Lemma~\ref{key-lemma}:  

\begin{corollary}\label{K-struct-corollary}
The vector space
\[
V^\vee_K := \text{$K$-span of }\; \phi_0^\vee, \ldots, \phi_{r-1}^\vee
\]
consists of the limits at $t=0$ of algebraic families of linear functionals on the solution space $V$. 
\end{corollary}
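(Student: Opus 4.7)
My plan is to deduce the corollary directly from Lemma~\ref{key-lemma}, which already identifies the limit of an algebraic family analytic at $t=0$ as $\sum_{j=0}^{r-1} v_j(0) \phi_j^\vee$. So the proof is a two-sided inclusion, each side taking only one line.

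For the inclusion stating that every limit lies in $V^\vee_K$: given an algebraic family with symbol $m = \sum_{j=0}^{r-1} v_j(t) \theta^j$ that is analytic at $t=0$, each $v_j \in K(t)$ has no pole at $0$, so the value $v_j(0)$ belongs to $K$. Hence by formula~\eqref{lim-m} the limit $\lim_{\Im(z) \to +\infty} \pi'_z = \sum_{j=0}^{r-1} v_j(0) \phi_j^\vee$ is a $K$-linear combination of $\phi_0^\vee, \ldots, \phi_{r-1}^\vee$, i.e.\ it lies in $V^\vee_K$.

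For the reverse inclusion: pick any element $\sum_{j=0}^{r-1} \lambda_j \phi_j^\vee \in V^\vee_K$ with $\lambda_j \in K$. Then the algebraic family with symbol $m = \sum_{j=0}^{r-1} \lambda_j \theta^j$ (that is, with constant coefficients $v_j(t) = \lambda_j \in K \subset K(t)$) is trivially analytic at $t=0$, and Lemma~\ref{key-lemma} tells us its limit equals $\sum_{j=0}^{r-1} \lambda_j \phi_j^\vee$. This realises an arbitrary element of $V^\vee_K$ as the limit of an algebraic family.

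There is no real obstacle here: both directions are formal consequences of equation~\eqref{lim-m}, together with the observation that a rational function in $K(t)$ without a pole at $0$ evaluates to an element of $K$ there, and conversely that elements of $K$ themselves give rise to (constant) algebraic families analytic at $t=0$.
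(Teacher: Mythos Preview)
Your proof is correct and follows exactly the same two-line argument as the paper: one direction uses that $v_j(0)\in K$ in formula~\eqref{lim-m}, and the other takes the constant symbol $m=\sum_j \lambda_j \theta^j$ to realise any element of $V_K^\vee$ as a limit.
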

\begin{proof} Since in formula~\eqref{lim-m} all $v_j(0) \in K$,  we obtain $\lim_{\Im(z) \to \infty} \pi'_z \in V_K^\vee$. Conversely, any functional $\sum \lambda_j \phi_j^\vee \in V_K^\vee$ is the limit of the family $\pi_z'(m)$ corresponding to the symbol $m = \sum_j \lambda_j \t^j$.\end{proof}

Our next goal is to describe a mixed Hodge structure on $V^\vee$. Recall that $N = \log(\gamma)$ is a nilpotent transformation, and hence images of its powers define a finite filtration on $V^\vee$. We define the weight filtration $\sW_\bullet V^\vee$ as follows:
\be{W}\begin{matrix}
0 & \subset& N^{r-1}(V^\vee) & \subset & \ldots & \subset & N(V^\vee) & \subset & V^\vee \\
\parallel &  & \parallel &  &  &  & \parallel &  & \parallel \\
\sW_{-1} & \subset & \sW_{0}=\sW_{1} & \subset & \ldots & \subset & \sW_{2r-4}=\sW_{2r-3} & \subset & \sW_{2r-2}. \\
\end{matrix}\ee
This filtration is the shift $\sW_\bullet = \sL_{\bullet-r+1}$ of the Jacobson filtration associated to $N$ (see~\eqref{Jacobson-L}). Note that it is is naturally defined on any $\Q$-structure preserved by the local monodromy transformation $\gamma$.

To define the Hodge filtration we will identify elements of $V^\vee$ with the limits of families of linear functionals at $t=0$ and filter families by the order of their symbol in $\t$. Consider the ring of differential operators $\sD = K(t)[\t]$ and note that symbols $m=\sum_{j=0}^{r-1} v_j(t) \t^j$ of algebraic families of linear functionals in Definition~\ref{alg-family-def} can be thought as elements of the differential module $M = \sD/\sD L \cong \sum_{j=0}^{r-1} K(t) \t^j$. More generally, we would like to consider families with analytic coefficients $v_j \in \sO_\Delta^{an}$. Their symbols are elements of  
\[
\widetilde M \= \widetilde\sD/\widetilde\sD L \;\cong\; \sum_{j=0}^{r-1} \sO_\Delta^{an} \; \t^j,
\]
where $\widetilde\sD=\sO_\Delta^{an}[\t]$ is the ring of differential operators whose coefficients are holomorphic functions in $\Delta$. For each $z \in \sH$ formula~\eqref{alg-functionals} yields a map 
\[
\pi_z: \widetilde M \to V^\vee.
\]
We now consider a decreasing filtration of $\widetilde M$ by $\sO_\Delta^{an}$-modules given by 
\[
\sF^p \widetilde M := \sum_{j=0}^{r-1-p} \sO^{an}_\Delta \, \t^j, \quad 0 \le p \le r-1
\]
and define a family of filtrations on $V^\vee$ indexed by $z \in \sH$ via
\[
\sF^p_z V^\vee \,:=\, \pi_z \left( \sF^p \widetilde M \right) , \quad 0 \le p \le r-1.
\]
The computation in Lemma~\ref{key-lemma} applies also to analytic families. An immediate consequence is the following

\begin{corollary}\label{Flim} The limiting filtration
\[
\sF^p_\infty V^\vee := \underset{\Im(z) \to \infty}\lim \exp(- z N) \sF^p_z V^\vee, \quad 0 \le p \le r-1
\]
exists and is given by 
\[
\sF^p_\infty V^\vee \= \C\text{-span of } \phi_0^\vee, \ldots, \phi_{r-1-p}^\vee.
\]
\end{corollary}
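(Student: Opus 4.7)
The plan is to identify $\sF^p_z V^\vee$ with the $\C$-span of the $r-p$ functionals $\pi_z(\t^0), \ldots, \pi_z(\t^{r-1-p})$, and then use (the mild extension to analytic symbols of) Lemma~\ref{key-lemma} to show that each $\pi'_z(\t^j) := \exp(-zN)\pi_z(\t^j)$ converges individually to $\phi_j^\vee$ as $\Im(z) \to \infty$.

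First I would observe that although $\sF^p \widetilde M$ is infinite-dimensional over $\C$, the map $\pi_z$ of~\eqref{alg-functionals} depends on each coefficient $v_j \in \sO_\Delta^{an}$ only through the complex number $v_j(e(z))$. Hence
\[
\sF^p_z V^\vee \= \pi_z\!\left( \sum_{j=0}^{r-1-p} \sO_\Delta^{an}\, \t^j \right) \= \C\text{-span of } \pi_z(\t^0), \ldots, \pi_z(\t^{r-1-p}).
\]
The full collection $\pi_z(\t^0), \ldots, \pi_z(\t^{r-1})$ is a basis of $V^\vee$ for every $z \in \sH$, since by Cauchy's theorem a solution of $e^*L$ is determined by its value and first $r-1$ derivatives at $z$; in particular $\dim_\C \sF^p_z V^\vee = r-p$.

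Next, I would apply the analytic version of Lemma~\ref{key-lemma} to the constant symbol $m = \t^j$, which has $v_j \equiv 1$, $v_i \equiv 0$ for $i \ne j$, and is trivially analytic at $t=0$. Formula~\eqref{lim-m} then yields
\[
\lim_{\Im(z) \to \infty} \pi'_z(\t^j) \= \phi_j^\vee \qquad \text{for each } 0 \le j \le r-1-p.
\]
Since $\exp(-zN)\sF^p_z V^\vee$ is the $\C$-span of the vectors $\pi'_z(\t^j)$, and the limits $\phi_0^\vee, \ldots, \phi_{r-1-p}^\vee$ are part of a dual basis and thus linearly independent, the spanning vectors $\pi'_z(\t^j)$ remain linearly independent for $\Im(z)$ sufficiently large. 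Therefore the subspaces $\exp(-zN)\sF^p_z V^\vee$ converge in the Grassmannian $\mathrm{Gr}(r-p, V^\vee)$ to the span of the limits, which is precisely the claimed $\C$-span of $\phi_0^\vee, \ldots, \phi_{r-1-p}^\vee$.

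The only non-routine ingredient is the extension of Lemma~\ref{key-lemma} from rational symbols to symbols with coefficients in $\sO_\Delta^{an}$. Inspection of its proof shows that rationality is never used: the calculation only requires that each $v_j$ extends holomorphically to $t=0$ so that the final expression $\sum_j v_j(0)\, u_j(0)\, j!/(2\pi i)^j$ makes sense as the limit as $\Im(z) \to \infty$. This holds by hypothesis for $v_j \in \sO_\Delta^{an}$, and is trivially satisfied for the constant coefficients we actually need here.
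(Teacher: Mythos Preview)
Your proposal is correct and follows exactly the approach the paper intends: the paper states the corollary as ``an immediate consequence'' of the fact that the computation in Lemma~\ref{key-lemma} applies to analytic families, and you have simply written out the details of that immediate consequence---reducing $\sF^p_z V^\vee$ to the span of the $\pi_z(\t^j)$, applying~\eqref{lim-m} with $m=\t^j$, and invoking Grassmannian convergence. Your remark that the analytic extension is not even strictly needed (since the symbols $\t^j$ you actually use are constant) is a nice observation that slightly sharpens what the paper says.
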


\begin{proposition}\label{MHS-cor} Let $\sW_\bullet$ be the Jacobson filtration associated to the nilpotent transformation $N = \log(\gamma)$ of $V^\vee$ and shifted by $r-1$. Let $\sF^\bullet_\infty$ 
be the limiting filtration from Corollary~\ref{Flim}. Then for any $\Q$-structure on the solution space $V_\Q$ preserved by the monodromy transformation $\gamma$ the triple $(V_\Q^\vee, \sW_\bullet, \sF^\bullet_{\infty})$ is a mixed Hodge structure.
\end{proposition}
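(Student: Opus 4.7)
The plan is to verify the three axioms of a mixed Hodge structure by explicit computation in the standard basis. Corollary~\ref{Flim} already gives the Hodge filtration $\sF^\bullet_\infty$ in the dual basis $\phi_0^\vee, \ldots, \phi_{r-1}^\vee$, so the task reduces to describing $\sW_\bullet$ in the same basis and then reading off each graded piece.

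First I would check that $\sW_\bullet$ is defined over $\Q$. Because the local monodromy is maximally unipotent, $\gamma - I$ is nilpotent, so $N = \sum_{h \ge 1} (-1)^{h-1}(\gamma-I)^h/h$ is a $\Q$-polynomial in $\gamma$; hence $N$ preserves any $\gamma$-invariant $V_\Q^\vee$, each $N^j V^\vee$ descends to a $\Q$-subspace, and so does the Jacobson filtration after the shift by $r-1$.

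The key computation is the action of $N$ in the standard basis. Starting from~\eqref{st-basis-z} and expanding $(2\pi i (z+1))^j$ by the binomial theorem one obtains
\[
\gamma(\phi_k) \= \sum_{m=0}^{k} \frac{(2\pi i)^m}{m!}\,\phi_{k-m},
\]
and taking the logarithm gives $N\phi_k = 2\pi i\,\phi_{k-1}$ for $k \ge 1$ (with $N\phi_0 = 0$). This is just a restatement of the maximally unipotent monodromy matrix $\exp(2\pi i A)$ already recorded in \S\ref{sec:st-basis}. Dualising, $N$ acts on $V^\vee$ as a single Jordan block of size $r$ sending $\phi_k^\vee$ to a nonzero multiple of $\phi_{k+1}^\vee$, so $N^j V^\vee$ is the $\C$-span of $\phi_j^\vee,\ldots,\phi_{r-1}^\vee$. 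Plugging into~\eqref{Jacobson-L} and shifting by $r-1$ yields
\[
\sW_{2k} \= \sW_{2k+1} \= \C\text{-span of } \phi_{r-1-k}^\vee,\ldots,\phi_{r-1}^\vee, \qquad 0 \le k \le r-1.
\]

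With both filtrations explicit, the pure Hodge structure on each graded piece is immediate bookkeeping: $\sF^p_\infty \cap \sW_{2k}$ is spanned by those $\phi_i^\vee$ with $r-1-k \le i \le r-1-p$, and modulo $\sW_{2k-2}$ only the class of $\phi_{r-1-k}^\vee$ survives, which is nonzero precisely when $p \le k$. Hence $gr^\sW_{2k} V^\vee$ is one-dimensional with $\sF^k = gr^\sW_{2k}$ and $\sF^{k+1} = 0$, realising the unique pure Hodge structure $\Q(-k)$ of weight $2k$; odd graded pieces vanish and are trivially pure of the correct weight. The only real technical step is the derivation of $N\phi_k = 2\pi i\,\phi_{k-1}$ and its transport to $V^\vee$; once that is in hand the MHS axioms follow from comparing the two explicit descriptions.
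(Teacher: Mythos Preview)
Your argument is correct and follows essentially the same route as the paper: compute $N^j(V^\vee)=\mathrm{Span}_\C(\phi_j^\vee,\ldots,\phi_{r-1}^\vee)$, compare with $\sF^p_\infty=\mathrm{Span}_\C(\phi_0^\vee,\ldots,\phi_{r-1-p}^\vee)$, and read off that each $gr^\sW_{2k}$ is one-dimensional of Hodge type $(k,k)$. The paper packages the last step by saying the two filtrations are \emph{opposite}, i.e.\ $V^\vee=\sW_{2k}\oplus\sF^{k+1}_\infty$, while you unwind the intersections directly; you also make explicit the point that $N$ is a $\Q$-polynomial in $\gamma$ so that $\sW_\bullet$ is defined over $V_\Q^\vee$, which the paper leaves implicit.
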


\begin{proof} Filtration $\sW_\bullet$ is given explicitly by~\eqref{W}. Note that $N^j(V^\vee)= Span_\C(\phi_j^\vee,\ldots,\phi_{r-1}^\vee)$, and hence the two filtrations are opposite in the sense that $V^\vee = \sW_{2k} \oplus \sF^{k+1}_{\infty}$ for each $0 \le k \le r-1$. It follows that the filtration induced by $\sF^\bullet_{\infty}$ on $gr_{2k}^\sW \, V^\vee$ is zero in degrees $> k$ and everything in degree $k$.
\end{proof}

Note that the pure graded pieces of the mixed Hodge structure in Proposition~\ref{MHS-cor} are one dimensional. With the notation explained in \S\ref{sec:HS} we have 
\[gr^{\sW} V^\vee \cong \oplus_{j=0}^{r-1}\Q(-k).
\]

Let us also mention that one can always find a $\Q$-structure preserved by $\gamma$. For example, the reader may check that $\gamma$ preserves the $\Q$-span of $(2\pi i)^{-k} \phi_k$, $0 \le k \le r-1$. For a deeper example one can turn to Picard--Fuchs differential operators. As we mentioned in \S\ref{sec:PF}, their spaces of solutions possess a natural $\Q$-structure consisting of period functions.

\begin{remark*} The construction of this section applies to Picard--Fuchs differential operators with maximally unipotent local monodromy. Namely, suppose $X \to U$ is a smooth projective family and in the setting described in \S\ref{sec:PF} we assume in addition that $\omega$ belongs to the smallest $\sO_U$-submodule in the Hodge filtration $\sF^m H^{m}_{dR}(X/U)$, and that the operator $L \in \sD_U$ annihilating $\omega$ is of order $r=m+1$. Then using the Griffiths' transversality property of a variation of Hodge structure and the fact that the monodromy of $L$ is maximally unipotent, one can show that the Hodge filtration on $M=\sD_U / \sD_U L \cong \sD_U \omega \subset H^{r-1}_{dR}(X/U)$  is given by~$\sF^{\bullet} M = \sum_{j=0}^{r-1-\bullet} \sO_U \t^j \omega$. Solutions of $L$ can be identified with horizontal sections of the analytification $M^\vee_{an} = M^\vee \otimes_\sO \sO^{an}$ of the dual connection $ M^\vee = \Hom_{\sO}(M,\sO)$. Over the punctured disk $\Delta^*$, this identification yields the pairing of elements $m = \sum_j v_j \t^j \in M$ with solutions $u \in V$. The result of this pairing is the analytic function $z \mapsto \pi_z(u)$ given by the right-hand side in our formula~\eqref{alg-family-def}. Since the limiting process of Lemma~\ref{key-lemma} coincides with the one in~\cite{Sch73}, the mixed Hodge structure in Proposition~\ref{MHS-cor} is the limiting mixed Hodge structure of Deligne and Schmid. 

In this geometric situation, there is a fine notion of the de Rham structure of the limiting mixed Hodge structure. The arguments given in~\cite[Remark 42]{BlVl} indicate that it should coincide with our $K$-structure $V_K$ spanned by the standard basis in the space of solutions. 
\end{remark*}

\bigskip
\bigskip

\noindent{\bf Afterword.} The content of this note is essentially covered by Lemma 41 in~\cite{BlVl}, where we use the ideas explained here to compute periods of limiting Hodge structures. I am grateful to Spencer Bloch for introducing me to this interesting subject.  Special thanks to Wadim Zudilin, whose expertise in hypergeometric functions helped to work out the example given in \S\ref{sec:PF}.


\begin{thebibliography}{xx}

\bibitem{Sch73} W. Schmid, \emph{Variation of Hodge structure: the singularities of the period mapping}, Inventiones math. 22 (1973),  211--319 
\bibitem{Be-GaussHF} F. Beukers, \emph{Gauss' hypergeometric function}, Progress in Mathematics 260 (2007), 23--42 
\bibitem{KoZa-Periods} M. Kontsevich, D. Zagier, \emph{Periods}, Mathematics unlimited -- 2001 and beyond (2001), 771--808
\bibitem{BlVl} S. Bloch, M. Vlasenko, \emph{Motivic gamma functions, monodromy and Ap\'ery constants}, 	arXiv:1908.07501 
\end{thebibliography}
\end{document}